\newtheorem{theorem}{Theorem}[section]
\newtheorem{corollary}{Corollary}[section]
\newtheorem{lemma}[theorem]{Lemma}
\theoremstyle{definition}
\newtheorem{definition}{Definition}[section]
\theoremstyle{definition}
\newtheorem{proposition}[theorem]{Proposition}
\newtheorem{exmp}{Example}[section]
\title{Counting rotational subsets of the circle $\mathbb{R}/ \mathbb{Z}$ under the angle-multiplying map $t\mapsto dt$}
\author{Yee Ern Tan}
\begin{document}
\date{June 24, 2023}
\maketitle
\abstract{
A rotational set is a finite subset $A$ of the unit circle $\mathbb{T}=\mathbb{R}/ \mathbb{Z}$ such that the angle-multiplying map $\sigma_{d}:t\mapsto dt$ maps $A$ onto itself by a cyclic permutation of its elements. Each rotational set has a geometric rotation number $p/q$. These sets were introduced by Lisa Goldberg to study the dynamics of complex polynomial maps. In this paper we provide a necessary and sufficient condition for a set to be $\sigma_{d}$-rotational with rotation number $p/q$. As applications of our condition, we recover two classical results and enumerate $\sigma_d$-rotational sets with rotation number $p/q$ that consist of a given number of orbits.
}

\begin{section}{Introduction and preliminaries}

Let $\mathbb{T}=\mathbb{R}/\mathbb{Z}$ be the unit circle parametrized by angle measured in revolutions. Impose a total order on $\mathbb{T}$ by choosing representatives in $[0,1)\subset\mathbb{R}$. That is, given $s,t\in\mathbb{T}$ we say that $s<t$ if and only if $s'<t'$ where $s',t'\in [0,1)$ correspond to the equivalence classes $s,t$ respectively. Likewise we impose a total order on $\mathbb{Z}/q\mathbb{Z}$ by choosing representatives in $\{0,1,\dots,q-1$\}. Fix an integer $d\geq 2$ and define the angle-multiplying map $\sigma_{d}$ : $\mathbb{T}\rightarrow\mathbb{T}$ with $\sigma_{d}(t)=dt, \forall t\in\mathbb{T}$. The {\it orbit} of an element $ t\in\mathbb{T}$ is the set $\{\sigma_{d}^{k}(t):k=0,1,2,\dots\}$. 

\begin{definition} Let $A=\{t_{0}<t_{1}<t_{2}<\dots<t_{q-1}\}$ be a finite subset of $\mathbb{T}$ indexed in increasing order, where indices are viewed as elements of $\mathbb{Z}/ q\mathbb{Z}$. Then $A$ is $\sigma_{d}$-{\it rotational} if for some fixed $0\neq p\in \mathbb{Z}/ q\mathbb{Z}$ we have $\sigma_{d}(t_{i})=t_{i+p}$ for every $t_{i}\in A.$ Each rotation set is assigned a \textit{rotation number}, which is the rational number $p/q$ whose expression may be further reducible. A set is \textit{rotational} if it is $\sigma_d$-rotational for some $d\geq 2$.
\end{definition}

Rotational sets were introduced by Goldberg in [2] to study and classify polynomial maps $\mathbb{C}\to\mathbb{C}$ in the setting of complex dynamics [3]. Goldberg shows in [2] that rotational sets are uniquely determined by their rotation number and {\it deployment sequence}, which describes the placement of the set with respect to the fixed points of $\mathbb{T}$ under $\sigma_{d}$. This allows for an enumeration of $\sigma_{d}$-rotational orbits with rotation number $p/q$. (Rotational sets are rotational orbits or the union of rotational orbits.) This is generalized in [6] by Petersen and Zakeri, who enumerate periodic orbits under $\sigma_{d}$ with any {\it combinatorial type}. That is, periodic orbits which are generally permuted under $\sigma_{d}$ are considered. We continue progress on the combinatorics of rotational sets by introducing a new characterization of rotational sets (Theorem \ref{thm}). Two results from [2] promptly follow:

\begin{enumerate}
\item There are ${{d-2+q}\choose d-2}$ $\sigma_{d}$-rotational orbits with rotation number $p/q$ (in lowest terms).
\item A $\sigma_{d}$-rotational set contains at most $d-1$ orbits.
\end{enumerate}

In Theorem \ref{main} we enumerate all $\sigma_{d}$-rotational sets with rotation number $p/q$. The proof is constructive in the sense that it suggests an algorithm for generating the rotational sets.

\begin{exmp} The set $\{\frac{1}{15}, \frac{2}{15}, \frac{4}{15}, \frac{8}{15}\}$ is $\sigma_{2}$-rotational with rotation number $\frac{1}{4}$. The set
$$
A=\big\{ \frac{8}{26}, \frac{17}{26}, \frac{20}{26}, \frac{23}{26}, \frac{24}{26}, \frac{25}{26} \big\}
$$
is $\sigma_{3}$-rotational with rotation number $ \frac{4}{6}=\frac{2}{3}$. Notice that $A$ is the union of two $\sigma_3$-rotational orbits $ \{\frac{8}{26},\frac{24}{26},\frac{20}{26}\}$ and $ \{\frac{17}{26},\frac{25}{26},\frac{23}{26}\}.$ Both smaller orbits also have rotation number $\frac{2}{3}$.
\end{exmp}

A subset of $\mathbb{Z}/q\mathbb{Z}$ consists of \textit{consecutive} elements if it has the form $\{i,i+1,i+2,\dots\}$ for some $i\in \mathbb{Z}/q\mathbb{Z}$. A collection indexed by elements in $\mathbb{Z}/q\mathbb{Z}$ consists of \textit{consecutive} elements if the indices are consecutive.

\begin{proposition}\label{inter} {\it Let} $A=\{t_{0}<t_{1}<t_{2}<\dots<t_{q-1}\}$ {\it be a} $\sigma_{d}$-{\it rotational set with rotation number $p/q$, that is, $\sigma_d(t_i)=t_{i+p}$. Then} $A$ {\it is the union of} $n=\text{gcd}(p,q)$ {\it orbits. Each orbit in the union is} $\sigma_{d}$-{\it rotational and has} {\it rotation number} $p/q$. {\it The orbits are interlaced, meaning, given any} $n$ {\it consecutive elements of} $A$ {\it no two elements come from the same orbit}.
\end{proposition}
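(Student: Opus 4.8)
The plan is to transfer the entire problem to the index set $\mathbb{Z}/q\mathbb{Z}$. Since $A=\{t_0<t_1<\dots<t_{q-1}\}$ is indexed in increasing order and $\sigma_d(t_i)=t_{i+p}$, the action of $\sigma_d$ on $A$ is conjugate to the translation map $\phi:i\mapsto i+p$ on $\mathbb{Z}/q\mathbb{Z}$. Thus every dynamical question about $\sigma_d|_A$ becomes a purely additive question about $\phi$, and I would carry out all computations at the level of indices, only at the end translating back via $i\leftrightarrow t_i$.

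First I would identify the orbits of $\phi$. Writing $n=\gcd(p,q)$ and $p=np'$, $q=nq'$ with $\gcd(p',q')=1$, I claim the $\phi$-orbit of an index $i$ is exactly the congruence class $\{i'\in\mathbb{Z}/q\mathbb{Z}:i'\equiv i\pmod n\}$. The inclusion $\subseteq$ is immediate because $i+kp\equiv i\pmod n$ for all $k$. For equality I would compare cardinalities: the orbit of $i$ has size equal to the least $m>0$ with $mp\equiv 0\pmod q$, namely $q/\gcd(p,q)=q'$, which is exactly the size of the congruence class; since the orbit sits inside the class and both have $q'$ elements, they coincide. Consequently there are exactly $n$ orbits, one for each residue $j\in\{0,1,\dots,n-1\}$, giving the first assertion. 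This identification of orbits with residue classes modulo $n$ is the crux of the argument, and I expect it to be the main (though short) obstacle — everything else follows formally from it.

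The interlacing statement is then immediate: any $n$ consecutive elements $t_i,t_{i+1},\dots,t_{i+n-1}$ have indices that are $n$ consecutive integers, hence pairwise distinct modulo $n$, so they lie in $n$ distinct orbits. For the rotation number, I would fix the orbit $O_j=\{t_i:i\equiv j\pmod n\}=\{t_j,t_{j+n},t_{j+2n},\dots,t_{j+(q'-1)n}\}$. Because the global ordering of $A$ restricts to the ordering of any subset, these elements are already listed in increasing order, so reindexing $O_j$ as $\{s_0<s_1<\dots<s_{q'-1}\}$ gives $s_k=t_{j+kn}$. A direct computation then shows $\sigma_d(s_k)=t_{j+kn+p}=t_{j+(k+p')n}=s_{k+p'}$, where the index $k+p'$ is read modulo $q'$; the only care needed is to check that reducing $k+p'$ modulo $q'$ corresponds to reducing $j+(k+p')n$ modulo $q$, which holds since $q'n=q$. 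Hence each $O_j$ is $\sigma_d$-rotational with rotation number $p'/q'=p/q$ (noting that $p'\not\equiv 0\pmod{q'}$, since $1\le p<q$ forces $q'>1$ and $\gcd(p',q')=1$), completing the proof.
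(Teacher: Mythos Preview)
Your proof is correct and follows essentially the same approach as the paper: both transfer the problem to the index set $\mathbb{Z}/q\mathbb{Z}$ via $t_i\leftrightarrow i$, identify the $\sigma_d$-orbits with the cosets of $\langle p\rangle=n\mathbb{Z}/q\mathbb{Z}$ (equivalently, the residue classes mod $n$), deduce the orbit count and interlacing from this, and then reindex each orbit to read off the rotation number $p'/q'=p/q$. Your version is slightly more streamlined in that you identify the orbits directly as congruence classes mod $n$, whereas the paper first works with cosets of $\langle p\rangle$ and then invokes the interlacing property to pin down which $t_i$ lie in the same orbit; but this is a cosmetic difference, not a different route.
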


\begin{proof}
Use the correspondence $t_{i}\leftrightarrow i$ for $t_{i}\in A$ and $i\in \mathbb{Z}/q\mathbb{Z}$. Then the orbit of $t_{i}$ corresponds to the coset $H+i$ of the subgroup $H$ generated by $p$. Working over the integers, the least $m$ such that $mp\equiv 0$ (mod $q$) is $m=q/n$. Thus $|H|=q/n$ and the index of $H$ in $\mathbb{Z}/q\mathbb{Z}$ is $n$. Equivalently, $A$ is the union of $n$ orbits of size $m$.

For every $a\in\mathbb{Z}/q\mathbb{Z}$, the consecutive elements $a,a+1,\dots,a+n-1\in\mathbb{Z}/q\mathbb{Z}$ come from the consecutive cosets $H,H+1,\dots,H+n-1\in (\mathbb{Z}/q\mathbb{Z})/H$ (perhaps not in the same order) which are pairwise disjoint; this proves the interlacing property. Relabel the elements of $A$ by the rule $t_j^{(k)}=t_{jn+k-1}$, where $1\leq k\leq n$, so that
$$A=\{t_0^{(1)}<t_0^{(2)}<\dots<t_0^{(n)}<t_1^{(1)}<t_1^{(2)}<\dots<t_1^{(n)}<\cdots\cdots<t_{m-1}^{(n)}\}.$$
Applying the interlacing property to the collections $t_0^{(1)},\dots,t_0^{(n)}$ and $t_0^{(2)},\dots,t_1^{(1)}$ we see that $t_0^{(1)}$ and $t_1^{(1)}$ come from the same orbit. More generally, $t_{i}^{(k)}$ and $t_{j}^{(k)}$ belong in the same orbit. Picking out an orbit $\mathcal{O}\subset A$, we have
$\mathcal{O}=\{t_0^{(k)}<t_1^{(k)}<\dots<t_{m-1}^{(k)}\}$
and setting $m'=p/n$,
$$\sigma_d(t_j^{(k)})=\sigma_d(t_{jn+k-1})=t_{jn+k-1+p}=t_{(j+m')n+k-1}=t_{j+m'}^{(k)}.$$
Therefore $\mathcal{O}$ is $\sigma_d$-rotational with rotation number $m'/m=(p/n)/(q/n)=p/q$.
\end{proof} 
\end{section}

\begin{section}{Supporting lemmas}
\begin{definition}\label{def}
Let the $q$-tuple $(a_{0},a_{1},a_{2}, \dots,a_{q-1})$, $a_i\in\{0,1,\dots,d-1\}$, denote the element in $\mathbb{T}$ with base $d$ representation $0.\overline{a_{0}a_{1}a_{2}\ldots a_{q-1}}.$ To make calculations easier, we view the indices as elements of $\mathbb{Z}/q\mathbb{Z}$.
\end{definition}

Note that the lexicographical order on $q$-tuples agrees with their order as points in $\mathbb{T}$. The $q$-tuple representation has other nice properties. Notice that the map $\sigma_d$ acts on the set of $q$-tuples by $\sigma_d(a_0,a_1,a_2,\dots)=(a_1,a_2,a_3,\dots)$ which leads to the equation 
\begin{equation}\label{shift}
\sigma_d^k(a_0,a_1,\dots,a_{q-1})=(a_k,a_{k+1},\dots,a_{k+q-1}), \text{ $k\in\mathbb{Z}/q\mathbb{Z}$}.
\end{equation}
The notation $\sigma_d^k$ means $\sigma_d^{k'}$ for any representative $k'\in\mathbb{Z}$ of $k$. Suppose $A$ is a $\sigma_{d}$-rotational set with rotation number $p/q$ in lowest terms. Then $A$ is the union of orbits of size $q$ so that every point in $A$ has a $q$-tuple representation. By assumption gcd$(p,q)=1$, hence $p$ has a multiplicative inverse in $\mathbb{Z}/q\mathbb{Z}$ which we denote $p^*$. 

\begin{lemma}\label{iff}
Let $\mathcal{O}=\{t_{0}<t_{1}<t_{2}<\dots< t_{q-1}\}$ be an orbit in $\mathbb{T}.$ {\it Then} $\mathcal{O}$ {\it is} $\sigma_{d}$-{\it rotational with rotation number} $p/q$ in lowest terms {\it if and only if} $\sigma_{d}^{p^*}(t_{i})=t_{i+1}$ {\it for every} $t_{i}\in \mathcal{O}.$
\end{lemma}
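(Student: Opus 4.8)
The plan is to use that, since $\mathcal{O}$ is an orbit of size $q$, the least period of each of its points is $q$; hence $\sigma_d^q$ restricts to the identity on $\mathcal{O}$, and the powers $\sigma_d^k$ are well-defined for $k\in\mathbb{Z}/q\mathbb{Z}$ on $\mathcal{O}$ as in \eqref{shift}. With this in hand the lemma becomes the elementary statement that, because $p^*p\equiv 1\pmod q$, advancing the index by $p$ under $\sigma_d$ and advancing it by $1$ under $\sigma_d^{p^*}$ describe the same cyclic action, reparametrized through the invertible multiplier $p^*$. Throughout, indices are read in $\mathbb{Z}/q\mathbb{Z}$ as in Definition \ref{def}.

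For the forward implication I would assume $\mathcal{O}$ is $\sigma_d$-rotational with rotation number $p/q$, so by definition $\sigma_d(t_i)=t_{i+p}$ for all $i$. A one-line induction on $k$ then gives $\sigma_d^k(t_i)=t_{i+kp}$ for every $k\geq 0$. Specializing to $k=p^*$ and using $p^*p\equiv 1\pmod q$ produces $\sigma_d^{p^*}(t_i)=t_{i+p^*p}=t_{i+1}$, which is the desired conclusion.

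For the converse I would run the same iteration from the other end: starting from $\sigma_d^{p^*}(t_i)=t_{i+1}$, induction yields $\sigma_d^{kp^*}(t_i)=t_{i+k}$ for all $k\geq 0$. Taking $k=p$ gives $\sigma_d^{pp^*}(t_i)=t_{i+p}$, and writing $pp^*=1+mq$ together with $\sigma_d^q=\mathrm{id}$ on $\mathcal{O}$ collapses $\sigma_d^{pp^*}$ to $\sigma_d$ on $\mathcal{O}$, so $\sigma_d(t_i)=t_{i+p}$. Because $\gcd(p,q)=1$, the shift $i\mapsto i+p$ is a cyclic permutation of $\mathbb{Z}/q\mathbb{Z}$, so $\sigma_d$ maps $\mathcal{O}$ onto itself cyclically and $\mathcal{O}$ is $\sigma_d$-rotational with rotation number $p/q$ in lowest terms.

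The argument is essentially formal, so I expect no real obstacle beyond bookkeeping. The single point requiring care is the well-definedness of $\sigma_d^{p^*}$ on $\mathcal{O}$: different integer representatives of $p^*$ differ by multiples of $q$, and only the period-$q$ identity $\sigma_d^q=\mathrm{id}|_{\mathcal{O}}$ guarantees that they act identically on the orbit. This same fact is what legitimizes reducing the exponents modulo $q$ in both iterations, so I would state it explicitly at the outset.
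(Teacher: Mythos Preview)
Your proposal is correct and follows essentially the same approach as the paper: iterate $\sigma_d(t_i)=t_{i+p}$ a total of $p^*$ times for the forward direction, iterate $\sigma_d^{p^*}(t_i)=t_{i+1}$ a total of $p$ times for the converse, and reduce exponents modulo $q$ using periodicity. The paper's proof is terser and leaves the periodicity and induction implicit, while you spell out the well-definedness of $\sigma_d^{p^*}$ on $\mathcal{O}$ and the reduction $pp^*=1+mq$, but the substance is identical.
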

\begin{proof}
Suppose $\mathcal{O}$ is $\sigma_{d}$-rotational with rotation number $p/q$ in lowest terms. Then for every $t_{i}\in \mathcal{O}$, we have $\sigma_{d}(t_{i})=t_{i+p}$ which implies $\sigma_{d}^{p^*}(t_{i})=t_{i+pp^*}=t_{i+1}$. Conversely, if $\sigma_{d}^{p^*}(t_{i})=t_{i+1}$ for every $t_{i}\in \mathcal{O}$ then $\sigma_{d}(t_{i})=(\sigma_{d}^{p^*})^{p}(t_{i})=t_{i+p}.$ The existence of $p^*$ implies that gcd$(p,q)=1$ so $p/q$ is in lowest terms.
\end{proof}

Define disjoint open intervals $I_{j}= ( \frac{j}{d},\frac{j+1}{d})\subset\mathbb{T}$ for $j=0,1,\dots,d-2$ and $I_{d-1} = (\frac{d-1}{d},\infty)\subset\mathbb{T}$. Notice that the boundary points of these intervals are precisely the points in the $\sigma_d$-preimage of $0$. Suppose the orbit of $t=(a_0,\dots,a_{q-1})$ is rotational. Since we exclude fixed points from the definition of rotational sets, $t\neq 0$. Moreover, $t\notin \sigma_d^{-1}(0)=\mathbb{T}\setminus(\cup_j I_j)$ or equivalently $t\in I_j$ for some $j$. It follows that $t\in I_j$ if and only if $j=a_0$.
\begin{lemma}\label{d}
Suppose $s=(a_0,a_1,\dots,a_{q-1})$ and $t=(b_0,b_1,\dots,b_{q-1})$ are distinct points of $\mathbb{T}$ such that their orbits are rotational sets. Then $a_0<b_0$ if and only if there exists $s<r<t$ such that $\sigma_d(r)=0$.
\end{lemma}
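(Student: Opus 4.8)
The plan is to reduce the entire statement to the combinatorics of how $s$ and $t$ sit among the preimages of $0$. As noted just before the lemma, $\sigma_d^{-1}(0)=\{\tfrac{k}{d}:k=0,1,\dots,d-1\}$, and these are exactly the endpoints of the intervals $I_0,\dots,I_{d-1}$, which interlace in the $[0,1)$ order as $\tfrac{0}{d},I_0,\tfrac{1}{d},I_1,\dots,\tfrac{d-1}{d},I_{d-1}$. Since the orbits of $s$ and $t$ are rotational, neither point equals a preimage of $0$, so each lies strictly inside a single interval; by the first-digit characterization already established, $s\in I_{a_0}$ and $t\in I_{b_0}$. Thus the assertion ``some preimage of $0$ lies strictly between $s$ and $t$'' ought to be equivalent to $a_0<b_0$, and the proof should be pure bookkeeping with the inequalities $\tfrac{j}{d}<\,\cdot\,<\tfrac{j+1}{d}$.

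For the forward direction I would assume $a_0<b_0$ and produce $r$ explicitly. Since $a_0<b_0\le d-1$ forces $a_0\le d-2$, the interval $I_{a_0}$ has the standard form $(\tfrac{a_0}{d},\tfrac{a_0+1}{d})$, so $s<\tfrac{a_0+1}{d}\le\tfrac{b_0}{d}$, while $t\in I_{b_0}$ gives $\tfrac{b_0}{d}<t$. Taking $r=\tfrac{b_0}{d}$---a genuine preimage of $0$ because $b_0\ge 1$---then satisfies $s<r<t$ and $\sigma_d(r)=0$, as required.

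For the converse I would start from a preimage $r=\tfrac{k}{d}$ with $s<r<t$. Because $r>s>\tfrac{a_0}{d}\ge 0$ we have $k\ge 1$, and the two defining inequalities unpack directly: $\tfrac{a_0}{d}<s<\tfrac{k}{d}$ yields $a_0<k$, and $\tfrac{k}{d}<t<\tfrac{b_0+1}{d}$ yields $k\le b_0$, so $a_0<k\le b_0$ and hence $a_0<b_0$.

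The computations are routine; the two places that demand attention are the strictness of the containments $s\in I_{a_0}$ and $t\in I_{b_0}$---this is precisely where the rotational hypothesis is used, to exclude $s$ or $t$ from being an endpoint---and the anomalous interval $I_{d-1}=(\tfrac{d-1}{d},\infty)$, which I expect to be the main thing to watch. One must confirm that $a_0=d-1$ cannot arise: in the forward direction it is excluded by $a_0<b_0\le d-1$, and in the converse any preimage obeys $\tfrac{k}{d}\le\tfrac{d-1}{d}$, so $s\in I_{d-1}$ would leave no room for a preimage $r>s$. As a consistency check, when $a_0\ge b_0$ one finds $t\le s$, so the open interval $(s,t)$ is empty and no such $r$ exists, matching the failure of $a_0<b_0$.
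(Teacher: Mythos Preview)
Your proof is correct and follows the same approach as the paper's: both use that $s\in I_{a_0}$ and $t\in I_{b_0}$ with the preimages of $0$ being exactly the interval endpoints, though you spell out explicit inequalities and edge cases where the paper gives only a two-line sketch. One small slip in your closing consistency check: $a_0=b_0$ does not force $t\le s$ (the two points merely lie in the same $I_{a_0}$, which contains no preimage of $0$), but this aside does not affect the main argument.
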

\begin{proof}

Suppose $a_0<b_0$. Since $s\in I_{a_0}$ and $t\in I_{b_0}$ there is a point in $r\in\sigma_d^{-1}(0)$ such that $s<r<t$. Conversely, suppose $r\in\sigma_d^{-1}(0)$ such that $s<r<t$. Since $s\in I_{a_0}$ and $t\in I_{b_0}$ we must have $I_{a_0}\neq I_{b_0}$, hence $a_0<b_0$.
\end{proof}

\begin{lemma}\label{lex}
Let $s=(a_0,a_1,\dots,a_{q-1})$ and $t=(b_0,b_1,\dots,b_{q-1})$. Fix an index $m\neq 0$. If $a_i\leq b_i$ for $i\neq m$ and $a_{m-1}<b_{m-1}$ then $s<t$.
\end{lemma}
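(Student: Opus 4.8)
The plan is to deduce $s<t$ from the lexicographic criterion: since the lexicographical order on $q$-tuples agrees with the order of the corresponding points in $\mathbb{T}$, it suffices to show that $s$ precedes $t$ lexicographically. Recall that the indices $0,1,\dots,q-1$ are read left to right, and that $m\neq 0$ means the representative $m$ lies in $\{1,\dots,q-1\}$, so $m-1\in\{0,\dots,q-2\}$ is a genuine index strictly smaller than $m$.

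First I would locate the first place at which the two tuples disagree. Let $j$ be the smallest representative in $\{0,\dots,q-1\}$ with $a_j\neq b_j$; such an index exists because $a_{m-1}<b_{m-1}$ already furnishes a disagreement at index $m-1$. Consequently $j\leq m-1<m$, and in particular $j\neq m$.

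Now I would invoke the hypotheses at this first point of disagreement. Since $j\neq m$, the hypothesis gives $a_j\leq b_j$; combined with $a_j\neq b_j$ this forces $a_j<b_j$. By minimality of $j$ all earlier entries coincide, $a_i=b_i$ for $i<j$. These are exactly the conditions under which $s$ precedes $t$ lexicographically, so $s<t$, as desired.

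The only real subtlety — and the point the statement is engineered to finesse — is the free index $m$, where nothing prevents $a_m>b_m$; a priori such a deficit might reverse the order. The resolution is that the mandated strict increase sits at the more significant position $m-1$, so the comparison is already decided before position $m$ is ever consulted. If one prefers an estimate free of any degenerate-tuple bookkeeping, the same fact drops out of the base-$d$ values $N_a=\sum_k a_k d^{q-1-k}$ and $N_b=\sum_k b_k d^{q-1-k}$: every term of $N_b-N_a$ with $k\neq m$ is nonnegative, the term $k=m-1$ contributes at least $d^{q-m}=d\cdot d^{q-1-m}$, and the worst possible deficit at $k=m$ is $-(d-1)d^{q-1-m}$, so $N_b-N_a\geq d^{q-1-m}>0$ and hence $s<t$.
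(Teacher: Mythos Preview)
Your proof is correct and follows essentially the same approach as the paper's: since $m\neq 0$ the index $m-1$ precedes $m$, so the hypotheses $a_0\leq b_0,\dots,a_{m-2}\leq b_{m-2},a_{m-1}<b_{m-1}$ settle the lexicographic comparison before position $m$ is ever reached. Your version is simply more explicit in locating the first point of disagreement, and the supplementary base-$d$ estimate is a pleasant alternative, but the underlying idea is identical.
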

\begin{proof}
Since $m\neq 0$, $m-1<m$. Now we use the lexicographical ordering and compare digits, noticing that $a_0\leq b_0, a_1\leq b_1,\dots,a_{m-2}\leq b_{m-2},a_{m-1}<b_{m-1}$. Therefore $s<t$.
\end{proof}

\begin{definition}
A $r$-{\it sequence} with \textit{length} $k$ is a nondecreasing sequence $a_{1}, a_{2}, a_{3},\dots,a_{k}$ with each $a_{i}$ chosen from $\{0,1,2,\dots,r-1\}.$ A simple counting argument shows that there are  ${{r-1+k} \choose {r-1}}$ $r$-sequences with length $k$.
\end{definition}

\begin{lemma}\label{bij}
There is a bijection between the set $\mathscr{A}$ of $r$-sequences with length $k$ such that $a_{i}<a_{i+1}$ for some fixed $1\leq i\leq r-1$ and the set $\mathscr{B}$ of $(r-1)$-sequences with length $k.$
\end{lemma}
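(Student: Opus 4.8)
The plan is to exhibit an explicit bijection $\phi:\mathscr{A}\to\mathscr{B}$ rather than merely compare cardinalities. Here $\mathscr{A}$ consists of the nondecreasing sequences $a_1\le a_2\le\cdots\le a_k$ with entries in $\{0,1,\dots,r-1\}$ that are strictly increasing at the prescribed index $i$, i.e.\ $a_i<a_{i+1}$, while $\mathscr{B}$ consists of the nondecreasing sequences of the same length $k$ with entries in $\{0,1,\dots,r-2\}$. The guiding idea is that the single forced strict jump at position $i$ is exactly the redundancy that lets us collapse the range from $r$ symbols down to $r-1$ symbols.

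First I would define $\phi$ by leaving the initial block untouched and shifting the tail down by one: given $a\in\mathscr{A}$, set $b_j=a_j$ for $1\le j\le i$ and $b_j=a_j-1$ for $i<j\le k$. The second step is to check that $\phi$ is well defined, i.e.\ that $b=(b_1,\dots,b_k)$ really lies in $\mathscr{B}$. For the entry bounds, the inequality $a_i<a_{i+1}\le r-1$ forces $a_j\le a_i\le r-2$ for every $j\le i$, while for $j>i$ we have $a_j\ge a_{i+1}>a_i\ge 0$, so $a_j\ge 1$ and hence $0\le b_j\le r-2$; thus every entry of $b$ lies in $\{0,\dots,r-2\}$. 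For monotonicity, the only delicate comparison is at the seam: $b_i=a_i$ while $b_{i+1}=a_{i+1}-1\ge a_i$ precisely because $a_i<a_{i+1}$, so the strict gap is absorbed and $b$ is nondecreasing.

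Next I would produce the inverse $\psi:\mathscr{B}\to\mathscr{A}$ by the reverse recipe: given $b\in\mathscr{B}$, set $a_j=b_j$ for $j\le i$ and $a_j=b_j+1$ for $j>i$. The same routine checks show that $a$ is nondecreasing with entries in $\{0,\dots,r-1\}$, and at the seam $a_i=b_i\le b_{i+1}<b_{i+1}+1=a_{i+1}$ reintroduces the required strict inequality, so $a\in\mathscr{A}$. Since $\phi$ and $\psi$ are visibly mutually inverse, $\phi$ is a bijection.

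The only real obstacle is the bookkeeping at the boundary index $i$: one must confirm both that the downward shift never pushes a tail entry below $0$ and that it never leaves an initial entry above $r-2$, with both facts flowing from the single hypothesis $a_i<a_{i+1}$ together with the symbol range. As a consistency check, this agrees with the counting identity $\binom{r-1+k}{r-1}-\binom{r-2+k}{r-1}=\binom{r-2+k}{r-2}$: removing from all $r$-sequences of length $k$ those with $a_i=a_{i+1}$ (which biject with $r$-sequences of length $k-1$ by deleting $a_{i+1}$) leaves exactly $\lvert\mathscr{B}\rvert$ sequences.
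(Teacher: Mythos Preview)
Your argument is correct and follows exactly the paper's approach: define the bijection by subtracting $1$ from the tail $a_{i+1},\dots,a_k$ and its inverse by adding $1$ back. You have simply supplied the routine verifications (range and monotonicity at the seam) that the paper leaves implicit, together with an optional counting cross-check.
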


\begin{proof}
Given a sequence $a_1,a_2,\dots,a_k$ in $\mathscr{A}$, subtract 1 from the terms $a_{i+1},a_{i+2},\dots,a_{k}$ to get a sequence in $\mathscr{B}$. Given a sequence $b_1,b_2,\dots,b_k$ in $\mathscr{B}$, add 1 to the terms $b_{i+1},b_{i+2},\dots,b_{k}$ to get a sequence in $\mathscr{A}$.
\end{proof}
\end{section}

\begin{section}{Main results}

\begin{theorem}\label{thm} For each $1\leq i\leq n$ let $\mathcal{O}_i$ be the orbit of $t_i\in\mathbb{T}$ with size $|\mathcal{O}_i|=q$, indexed so that $t_1<t_2<\dots<t_n$. We furthermore assume that the orbits $\mathcal{O}_i$ are distinct. The elements are written in $q$-tuple notation as

\begin{equation}\label{thmmat}
\begin{gathered}
t_{1} =(a_{0}^{(1)}, a_{1}^{(1)}, a_{2}^{(1)},\dots,a_{q-1}^{(1)}),\\
t_{2} =(a_{0}^{(2)}, a_{1}^{(2)}, a_{2}^{(2)},\dots,a_{q-1}^{(2)}),\\
t_{3} =(a_{0}^{(3)},a_{1}^{(3)}, a_{2}^{(3)},\dots,a_{q-1}^{(3)}),\\
\vdots \text{\qquad\qquad\qquad\qquad}\\
t_{n} =(a_{0}^{(n)}, a_{1}^{(n)}, a_{2}^{(n)}, \dots, a_{q-1}^{(n)}).
\end{gathered}
\end{equation}
Then each $t_i$ is the least element of $\mathcal{O}_i$ and the union of orbits $A=\cup_i \mathcal{O}_i$ is $\sigma_d$-rotational with rotation number $p/q$ in lowest terms if and only if
\begin{equation}\label{thmseq}
a_{0}^{(1)},a_{0}^{(2)},a_{0}^{(3)},\dots,a_{0}^{(n)}, a_{p^*}^{(1)}, a_{p^*}^{(2)},a_{p^*}^{(3)},\dots,a_{p^*}^{(n)},a_{2p^*}^{(1)}, a_{2p^*}^{(2)},a_{2p^*}^{(3)},\dots,a_{2p^*}^{(n)},\dots\dots, a_{(q-1)p^*}^{(n)}
\end{equation}
{\it is a} $d$-{\it sequence with} $a_{-(p+1)p^*}^{(n)}<a_{-pp^*}^{(1)}$.
\end{theorem}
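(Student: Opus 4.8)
The plan is to recast the whole biconditional as a single statement about orderings. For $1\le k\le n$ and $j\in\mathbb{Z}/q\mathbb{Z}$ set $u_j^{(k)}:=\sigma_d^{jp^*}(t_k)$; by (\ref{shift}) this is the $q$-tuple beginning with the digit $a_{jp^*}^{(k)}$, and $\{u_0^{(k)},\dots,u_{q-1}^{(k)}\}=\mathcal{O}_k$ because $p^*$ is a unit mod $q$. List these $nq$ points in the \emph{nominal order} $u_0^{(1)},u_0^{(2)},\dots,u_0^{(n)},u_1^{(1)},\dots,u_{q-1}^{(n)}$ and index this list by $g=jn+(k-1)\in\mathbb{Z}/nq\mathbb{Z}$, so the point in slot $g$ has first digit $c_g:=a_{jp^*}^{(k)}$, the $g$-th entry of (\ref{thmseq}). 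I would first record two bookkeeping facts: since $\sigma_d u_j^{(k)}=u_{j+p}^{(k)}$ and $\sigma_d^{p^*}u_j^{(k)}=u_{j+1}^{(k)}$, applying $\sigma_d$ sends slot $g$ to slot $g+pn$ and applying $\sigma_d^{p^*}$ sends slot $g$ to slot $g+n$. If the nominal order coincides with the true order $<$ on $\mathbb{T}$, then the second fact reads $\sigma_d^{p^*}(s_i)=s_{i+n}$, which upon iterating $p$ times (using $\sigma_d^q=\mathrm{id}$ on period-$q$ points and $pp^*\equiv 1$) becomes $\sigma_d(s_i)=s_{i+np}$, so $A$ is rotational with reduced rotation number $p/q$, and $u_0^{(k)}=t_k$ is least in $\mathcal{O}_k$. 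Conversely, Proposition \ref{inter} and Lemma \ref{iff} show that if each $t_k$ is least and $A$ is rotational then the sorted elements of $\mathcal{O}_k$ are exactly $u_0^{(k)}<u_1^{(k)}<\cdots$, and interlacing forces the global order to be the nominal one. Thus the left-hand side of the theorem is equivalent to the single assertion that the nominal order equals the true order, and it remains to prove that this assertion is equivalent to the right-hand side.

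For the forward direction I assume the nominal order is correct. Then (\ref{thmseq}) lists the first digits of the elements of $A$ in increasing order, and since $x<y$ forces the first digit of $x$ to be at most that of $y$ by lexicographic comparison, the sequence is nondecreasing, i.e.\ a $d$-sequence. For the strict inequality I would look at the seam. Writing $g_0=n(q-p)$, a direct check gives $a_{-pp^*}^{(1)}=c_{g_0}$ and $a_{-(p+1)p^*}^{(n)}=c_{g_0-1}$, while the bookkeeping fact shows $\sigma_d$ carries slots $g_0-1$ and $g_0$ to slots $nq-1$ and $0$, that is, to the largest and smallest points of $A$. If the first digits $c_{g_0-1}$ and $c_{g_0}$ were equal, the two points would lie in a common interval $I_\ell$, on which $\sigma_d$ is an orientation-preserving bijection onto $\mathbb{T}\setminus\{0\}$; order preservation would then force the largest point to be smaller than the smallest point, a contradiction. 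Hence $c_{g_0-1}<c_{g_0}$, which is exactly $a_{-(p+1)p^*}^{(n)}<a_{-pp^*}^{(1)}$.

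The reverse direction is the substantive one. Assuming (\ref{thmseq}) is a $d$-sequence with $c_{g_0-1}<c_{g_0}$, I must show the nominal order is correct; since the $nq$ points are distinct it suffices to verify the $nq-1$ consecutive inequalities stating that the point in slot $g$ is less than the point in slot $g+1$, for $0\le g\le nq-2$. Comparing first digits, the $d$-sequence gives $c_g\le c_{g+1}$, and if this is strict the lexicographic comparison settles it. If $c_g=c_{g+1}$, the two points share an interval $I_\ell$ (legitimate since points of $A$, being nonzero of period $q$, avoid the endpoints $\ell/d$), so by orientation preservation their order equals that of their $\sigma_d$-images, which occupy the consecutive slots $g+pn$ and $g+1+pn$; I then repeat the comparison there. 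Iterating translates the slot-pair by $pn$ each step, and because no orbit element is fixed (each $|\mathcal{O}_k|=q\ge 2$) the first digits along the chain are not all equal, so the chain must halt at a pair with distinct first digits, where the $d$-sequence dictates the correct direction \emph{provided that pair is not the wrap-around pair} $(nq-1,0)$, at which instead only $c_{nq-1}\ge c_0$ is available.

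The main obstacle, and the entire purpose of the strict hypothesis, is this last caveat: I must guarantee that no tie-chain is ever forced to decide at the seam $(nq-1,0)$. The unique consecutive pair sent to the seam by $\sigma_d$ is $(g_0-1,g_0)$, since $g_0-1+pn=nq-1$, and the hypothesis $c_{g_0-1}<c_{g_0}$ says precisely that a chain arriving at this pair halts there, correctly oriented, rather than stepping onto the seam. Chains whose translation-orbit avoids the seam never encounter the difficulty, while a chain in the seam's translation-orbit must meet $(g_0-1,g_0)$ before the seam. Carefully tracking this — confirming that every tie-chain terminates at a non-seam pair oriented the right way — is the delicate bookkeeping I expect to be the crux; once it is in place the nominal order is established, and the equivalence of the first paragraph completes the proof.
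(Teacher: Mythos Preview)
Your argument is correct and its skeleton --- recasting both sides as ``nominal order $=$ true order'' --- is exactly the paper's. The forward direction is essentially the same (your orientation-preservation argument for the strict inequality is a tidy variant of the paper's path-through-a-preimage-of-$0$ argument via Lemma~\ref{d}). In the reverse direction you diverge in presentation: the paper compares $\sigma_d^{kp^*}(t_n)$ and $\sigma_d^{(k+1)p^*}(t_1)$ by writing out all $q$ digits and showing, via explicit index arithmetic in $\mathbb{Z}/q\mathbb{Z}$, that every digit of the former is $\le$ the corresponding digit of the latter with strict inequality at exactly one position (the hypothesised one), and then applies Lemma~\ref{lex}; your tie-breaking chain is the dynamical repackaging of the same comparison, each $\sigma_d$-step advancing one digit and the strict hypothesis at slot $g_0-1$ playing precisely the role of the paper's single strict digit. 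Your version has the merit of handling the intra-block comparisons $u_j^{(k)}<u_j^{(k+1)}$ and the inter-block comparisons $u_j^{(n)}<u_{j+1}^{(1)}$ by one uniform mechanism, whereas the paper treats them separately.

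One small repair: your termination clause ``no orbit element is fixed, so the first digits along the chain are not all equal'' does not by itself force the chain to halt --- non-constancy of the digit sequence of $u$ is not the same as $c_{g+mpn}\ne c_{g+1+mpn}$ for some $m$. The correct reason, which you already have available, is that if the chain never halted then all $q$ digits of the two points would agree, contradicting the distinctness of the $nq$ points that you invoked at the outset.
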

We get the following corollary by restricting to the case $n=1$.
\begin{corollary}\label{cor}
Let $\mathcal{O}$ be the orbit of $t=(a_0,a_1,a_2,\dots,a_{q-1})$. Then $t$ is the least element of the $\sigma_d$-rotational orbit $\mathcal{O}$ with rotation number $p/q$ in lowest terms if and only if
\begin{equation}\label{ss}
    a_0\leq a_{p^*}\leq a_{2p^*}\leq \dots \leq a_{-(p+1)p^*} < a_{-pp^*}\leq\dots\leq a_{(q-1)p^*}.
\end{equation}
\end{corollary}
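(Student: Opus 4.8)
The plan is to obtain the corollary as the $n=1$ specialization of Theorem~\ref{thm}, after checking that the two index sets genuinely line up, and then to record a short self-contained argument (via Lemmas~\ref{iff} and~\ref{d}) that exposes why the single strict inequality must sit exactly where \eqref{ss} places it. For the reduction, the only delicate point is positional: the sequence in \eqref{thmseq} is $a_0,a_{p^*},a_{2p^*},\dots,a_{(q-1)p^*}$, whose entry in position $j$ is $a_{jp^*}$. Since $-p\equiv q-p$ and $-(p+1)\equiv q-p-1\pmod q$, the two entries $a_{-(p+1)p^*}$ and $a_{-pp^*}$ occupy the consecutive positions $q-p-1$ and $q-p$. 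A $d$-sequence is nondecreasing by definition, so inserting the extra strict inequality $a_{-(p+1)p^*}<a_{-pp^*}$ precisely between those two consecutive positions turns \eqref{thmseq} into the displayed chain \eqref{ss}. Hence the corollary follows from Theorem~\ref{thm} with $n=1$.

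Because that route hides the mechanism, I would also give the direct proof, which is short for a single orbit. Write $s_i=\sigma_d^{ip^*}(t)$ for $0\le i\le q-1$; by \eqref{shift} these are the $q$ cyclic shifts of $t$, the first digit of $s_i$ is $a_{ip^*}$, and $\sigma_d(s_i)=s_{i+p}$. Since $|\mathcal O|=q\ge 2$, each $s_i$ is periodic of period $q$, hence is not a $\sigma_d$-preimage of $0$, so $s_i\in I_{a_{ip^*}}$ genuinely. For the forward direction, assume $\mathcal O$ is rotational with rotation number $p/q$ and $t=t_0$ least; Lemma~\ref{iff} gives $t_i=s_i$, so the first digits of $t_0<t_1<\cdots<t_{q-1}$ are $a_0,a_{p^*},\dots,a_{(q-1)p^*}$. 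As the intervals $I_j$ are arranged in increasing order, $t_i<t_{i+1}$ forces $a_{ip^*}\le a_{(i+1)p^*}$ (this is the content of Lemma~\ref{d}), so the sequence is nondecreasing. For the single strict step, observe $\sigma_d(s_{q-p})=s_0$ is the least element while $\sigma_d(s_{q-p-1})=s_{q-1}$ is the greatest; were $s_{q-p-1}$ and $s_{q-p}$ in a common interval $I_j$, the increasing map $\sigma_d|_{I_j}$ and $s_{q-p-1}<s_{q-p}$ would give $s_{q-1}<s_0$, which is impossible. Thus $a_{-(p+1)p^*}<a_{-pp^*}$, which is exactly the strict step in \eqref{ss}.

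For the converse, assume \eqref{ss} and set $b_i=a_{ip^*}$, the first digit of $s_i$; then $b_0\le b_1\le\cdots\le b_{q-1}$ with $b_{q-p-1}<b_{q-p}$, and chaining $b_0\le b_{q-p-1}<b_{q-p}\le b_{q-1}$ gives $b_0<b_{q-1}$. It suffices to prove $s_0<s_1<\cdots<s_{q-1}$, for then $t=s_0$ is least, $\sigma_d^{p^*}(s_i)=s_{i+1}$, and Lemma~\ref{iff} yields rotationality with rotation number $p/q$ in lowest terms. For each cyclic pair $(s_m,s_{m+1})$ let $\epsilon_m$ denote the true relation, one of ${<}$ or ${>}$. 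If $b_m\ne b_{m+1}$ the two points lie in different intervals and $\epsilon_m$ is forced by which interval is smaller; if $b_m=b_{m+1}$ they share an interval on which $\sigma_d$ is increasing, so $\epsilon_m=\epsilon_{m+p}$. The map $m\mapsto m+p$ is a single $q$-cycle, and the wraparound pair $m=q-1$ has $b_{q-1}\ne b_0$, hence is of the first type with $\epsilon_{q-1}={>}$. I would then show $\epsilon_m={<}$ for every $m\ne q-1$: tracing $m\mapsto m+p$ from any such $m$, the first different-interval pair reached cannot be $q-1$, because the immediate predecessor of $q-1$ under this map is $q-1-p=q-p-1$, which is itself a different-interval (strict) pair by \eqref{ss} and so would have halted the trace earlier; thus the trace lands on some $m^*\ne q-1$ with $b_{m^*}<b_{m^*+1}$, whence $\epsilon_m=\epsilon_{m^*}={<}$. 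This gives $s_0<\cdots<s_{q-1}$, completing the converse.

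The crux is this last paragraph. The strict inequality is a single isolated constraint, and equalities $b_m=b_{m+1}$ elsewhere could a priori let the ``greatest maps to least'' wraparound—which forces $\epsilon_{q-1}={>}$—propagate backward and destroy monotonicity. What rescues the argument is precisely the \emph{location} of the strict step: $q-p-1$ is the unique predecessor of the wraparound index $q-1$ under $m\mapsto m+p$, so the forced strict pair there severs the propagation chain exactly where it would otherwise do damage. Verifying that this severing is both necessary and sufficient—that the placement dictated by $p^*$ is the only one compatible with a rotational orbit—is the step I expect to demand the most care, and it is also what makes the corollary (and, with $n$ interlaced copies, the full theorem) true.
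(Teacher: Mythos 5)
Your proposal is correct, and its two halves relate differently to the paper. The reduction half is exactly the paper's route: the paper states the corollary as the $n=1$ restriction of Theorem \ref{thm}, and your positional check that $a_{-(p+1)p^*}$ and $a_{-pp^*}$ occupy the consecutive slots $q-p-1$ and $q-p$ of (\ref{thmseq}) is the (unstated but needed) verification that the restriction really yields (\ref{ss}). Your supplementary direct proof, however, is a genuinely different argument from the paper's proof of the theorem. For the forward strict step the paper lifts a monotone path over the interval $J$ and invokes Lemma \ref{d} to locate a $\sigma_d$-preimage of $0$; you instead note that if $s_{q-p-1}$ and $s_{q-p}$ shared an interval $I_j$, monotonicity of $\sigma_d$ on $I_j$ would send the greatest element below the least --- shorter, and it avoids the path construction entirely. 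For the converse the paper compares full $q$-tuples digit by digit and closes with the lexicographic Lemma \ref{lex}; you instead propagate the order relation $\epsilon_m$ of each cyclic pair $(s_m,s_{m+1})$ along the $q$-cycle $m\mapsto m+p$ and observe that the forced reversal at the wraparound pair $q-1$ cannot contaminate the rest because its unique predecessor $q-p-1$ is precisely where (\ref{ss}) places the strict, different-interval step. This makes the \emph{location} of the strict inequality conceptually inevitable, where the paper's Lemma \ref{lex} computation verifies it by index bookkeeping; the trade-off is that your argument is specific to $n=1$, while the paper's lexicographic comparison handles all $n$ uniformly. Two small patches your version needs: $\epsilon_m$ is well defined only after ruling out $s_m=s_{m+1}$ (equality would propagate around the cycle and contradict $b_0<b_{q-1}$ at a different-interval pair), and interval membership of the $s_i$ requires $s_i\neq 0$ in addition to periodicity, since $0$ is the one periodic point in $\sigma_d^{-1}(0)$; this follows because $b_{q-p-1}<b_{q-p}$ forces a nonconstant digit string, and accordingly ``periodic of period $q$'' should be weakened to ``fixed by $\sigma_d^q$,'' as exact period $q$ is not known a priori in the converse but is exactly what your strict chain $s_0<\dots<s_{q-1}$ then delivers.
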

\begin{proof}
Suppose that $t_i$ is the least element of $\mathcal{O}_i$ and that $A$ is $\sigma_{d}$-rotational with rotation number $p/q$ in lowest terms. By Lemma \ref{iff} and by the interlacing property of Proposition \ref{inter} we can list the elements of $A$ in increasing order as
\begin{equation}\label{els}
t_1,t_2,\dots,t_n,\sigma_d^{p^*}(t_1),\sigma_d^{p^*}(t_2),\dots,\sigma_d^{p^*}(t_n),\sigma_d^{2p^*}(t_1),\sigma_d^{2p^*}(t_2),\dots\dots,\sigma_d^{(q-1)p^*}(t_n).
\end{equation}
From equation (\ref{shift}), we see that picking the ``leading digit'' (the first number that appears in the $q$-tuple) of each element above recovers (\ref{thmseq}). Since $q$-tuples respect lexicographical ordering, (\ref{thmseq}) is a $d$-sequence. Now notice that
$$\sigma_{d}(a_{(q-1)p^*-1}^{(n)},a_{(q-1)p^*}^{(n)},\dots)=(a_{(q-1)p^*}^{(n)},\dots) \quad\text{ and }\quad \sigma_{d}(a_{q-1}^{(1)},a_{0}^{(1)},\dots)=(a_{0}^{(1)},\dots).$$
\noindent Let $\gamma:[0,1]\to \mathbb{T}$ be a monotonically increasing path onto $J=[(a_{(q-1)p^*-1}^{(n)},\dots),(a_{q-1}^{(1)},\dots)]$. Since $(a_{(q-1)p^*}^{(n)},\dots)>t_1=(a_{0}^{(1)},\dots)$ the image of the path $\sigma_d\circ\gamma$ contains $0$. Then $J$ contains an element of the $\sigma_d$-preimage of 0, so by Lemma \ref{d} we obtain
$$
a_{-(p+1)p^*}^{(n)}=a_{(q-1)p^*-1}^{(n)}<a_{q-1}^{(1)}=a_{-pp^*}^{(1)}.
$$

Conversely, suppose (\ref{thmseq}) is a $d$-sequence with $a_{-(p+1)p^*}^{(n)}<a_{-pp^*}^{(1)}$. Observe that going down a column of (\ref{thmmat}) corresponds to going across a segment of (\ref{thmseq}). Thus $a_{i}^{(j)}\leq a_{i}^{(j+1)}$ for any $i\in \mathbb{Z}/q\mathbb{Z}$ and $1\leq j\leq n-1$. Then we have $t_{1}<t_{2}<\dots<t_{n}$ since the $t_{j}$ are distinct. More generally, $\sigma_{d}^{kp^*}(t_{1})<\sigma_{d}^{kp^*}(t_{2})<\dots<\sigma_{d}^{kp^*}(t_{n})$ for any integer $k$. We wish to additionally show that $\sigma_{d}^{kp^*}(t_{n})<\sigma_{d}^{(k+1)p^*}(t_{1})$ for $0\leq k\leq q-2$ so that (\ref{els})
is increasing. By Lemma \ref{iff} this would imply that $A$ is $\sigma_{d}$-rotational with rotation number $p/q$ in lowest terms, and that $t_i$ is the least element of $\mathcal{O}_i$. To this end, notice that for $0\leq k\leq q-2$
\begin{align}\label{pair}
\begin{split}
\sigma_{d}^{kp^*}(t_{n}) &=(a_{kp^*}^{(n)},\ a_{kp^*+1}^{(n)},\ a_{kp^*+2}^{(n)},\ \dots,\ a_{kp^*+q-1}^{(n)}),\\
\sigma_{d}^{(k+1)p^*}(t_{(1)}) &=(a_{(k+1)p^*}^{(1)},\  a_{(k+1)p^*+1}^{(1)},\  a_{(k+1)p^*+2}^{(1)},\ \dots,\ a_{(k+1)p^*+q-1}^{(1)}).
\end{split}
\end{align}
From (\ref{thmseq}) we have
$$
a_{kp^*+i}^{(n)}=a_{(k+ip)p^*}^{(n)}\leq a_{(k+ip+1)p^*}^{(1)}=a_{(k+1)p^*+i}^{(1)}
$$
unless $k+ip=q-1$. But $k+ip=q-1$ implies that
$$a_{kp^*+i-1}^{(n)}=a_{(k+ip-p)p^*}^{(n)}=a_{-(p+1)p^*}^{(n)}<a_{-pp^*}^{(1)}=a_{(k+ip+1-p)p^*}^{(1)}=a_{[k+1+(i-1)p]p^*}^{(1)}=a_{(k+1)p^*+i-1}^{(1)}.$$
Applying Lemma \ref{lex} to the pair (\ref{pair}) we see that $\sigma_{d}^{kp^*}(t_{n})<\sigma_{d}^{(k+1)p^*}(t_{0})$ except when $k+ip=q-1$ and $i=0$, i.e. when $k=q-1.$
\end{proof}
Given a $\sigma_d$-rotational orbit $\mathcal{O}$ with rotation number $p/q$ in lowest terms, we can choose its least element $t=(a_0,a_1,\dots,a_{q-1})$ which is associated with 
the $d$-sequence $(\ref{ss})$. Conversely, given a $d$-sequence of the form $(\ref{ss})$ we have $t=(a_0,a_1,\dots,a_{q-1})$ being the least element of its $\sigma_d$-rotational orbit. We can make this simpler since $(\ref{ss})$ corresponds to the $(d-1)$-sequence
\begin{equation}\label{rep}
a_{0},a_{p^*},\dots,a_{-(p+1)p^*},a_{-pp^*}-1,\dots,a_{(q-2)p^*}-1, a_{(q-1)p^*}-1
\end{equation}
via the bijection from Lemma \ref{bij}.
\begin{definition}
Fix a rotation number $p/q$ given in lowest terms. If $t=(a_0,a_1,\dots,a_{q-1})$ is the least element of its $\sigma_d$-rotational orbit $\mathcal{O}$ with rotation number $p/q$, we call $(\ref{rep})$ the {\it representative sequence} of $\mathcal{O}$. By Corollary \ref{cor} and Lemma \ref{bij} the set of $\sigma_d$-rotational orbits with rotation number $p/q$ is in bijection with the set of representative sequences, which is equal to the set of $(d-1)$-sequences of length $q$. We say that a collection of representative sequences $s_{i}=b_{0}^{(i)},b_{1}^{(i)},\dots,b_{q-1}^{(i)}$ for $1\leq i\leq n$ \textit{can be interlaced} if
$$
b_{0}^{(1)},b_{0}^{(2)},b_{0}^{(3)},\dots,b_{0}^{(n)},b_{1}^{(1)},b_{1}^{(2)},b_{1}^{(3)},\dots,b_{1}^{(n)},b_{2}^{(1)},b_{2}^{(2)},b_{2}^{(3)},\dots,b_{2}^{(n)},\dots\dots,b_{q-1}^{(n)}
$$
is a $(d-1)$-sequence, up to relabeling.
\end{definition}
\begin{corollary}
    For $1\leq i\leq n$, let $\mathcal{O}_i$ be a $\sigma_d$-rotational orbit with rotation number $p/q$. Then $A=\cup_i \mathcal{O}_i$ is $\sigma_d$-rotational with rotation number $p/q$ if and only if the associated collection of representative sequences $\{s_i\}_{i=1}^n$ can be interlaced.
\end{corollary}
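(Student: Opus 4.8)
The plan is to reduce this final corollary to Theorem~\ref{thm} by translating the ``can be interlaced'' condition on representative sequences back into the hypothesis of the theorem. Recall that each representative sequence $s_i$ is obtained from the least element $t_i=(a_0^{(i)},\dots,a_{q-1}^{(i)})$ of $\mathcal{O}_i$ via the chain of correspondences in Corollary~\ref{cor} and Lemma~\ref{bij}: the digits of $t_i$ arranged in the order $a_0^{(i)},a_{p^*}^{(i)},\dots,a_{(q-1)p^*}^{(i)}$ form a $d$-sequence satisfying~(\ref{ss}), and subtracting $1$ past the strict inequality produces the $(d-1)$-sequence~(\ref{rep}). So the first step is to fix notation: write $b_\ell^{(i)}$ for the $\ell$-th term of $s_i$, which by~(\ref{rep}) equals $a_{\ell p^*}^{(i)}$ for $\ell$ before the ``drop'' and $a_{\ell p^*}^{(i)}-1$ afterward.

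First I would establish the forward direction. Assuming $A=\cup_i\mathcal{O}_i$ is $\sigma_d$-rotational with rotation number $p/q$, I may (after relabeling the orbits so that $t_1<t_2<\dots<t_n$, which is exactly the relabeling hidden in the phrase ``up to relabeling'') apply Theorem~\ref{thm}. That theorem tells me the interlaced array of digits~(\ref{thmseq}) is a $d$-sequence with the boundary inequality $a_{-(p+1)p^*}^{(n)}<a_{-pp^*}^{(1)}$. The task is then to check that applying the $-1$ shift of Lemma~\ref{bij} uniformly across the interlaced $d$-sequence yields precisely the interlacing of the individual representative sequences $s_i$. The key observation is that the shift is performed at the same ``column position'' $\ell$ (namely the transition from $\ell$ with $\ell p^* = -(p+1)p^*$-type index to the next) within every orbit, because all orbits share the same rotation number $p/q$ and hence the same index arithmetic; so subtracting $1$ from the appropriate tail of the big interlaced $d$-sequence is the same as subtracting $1$ from the corresponding tail of each $s_i$ and re-interlacing. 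This is essentially Lemma~\ref{bij} applied once to the length-$nq$ interlaced $d$-sequence, and I would verify that the ``drop'' location there matches the common drop location of the $s_i$.

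For the converse I would run the same correspondence backward: assuming the $s_i$ can be interlaced, I relabel so that the interlaced array is a genuine $(d-1)$-sequence, apply the inverse bijection of Lemma~\ref{bij} (adding $1$ to the uniform tail) to recover an interlaced $d$-sequence of the form~(\ref{thmseq}), and confirm that this recovered array satisfies the boundary strict inequality $a_{-(p+1)p^*}^{(n)}<a_{-pp^*}^{(1)}$ demanded by Theorem~\ref{thm}. The strictness is automatic precisely because that transition is where the $+1$ is introduced. Theorem~\ref{thm} then certifies that $A$ is $\sigma_d$-rotational with rotation number $p/q$ and that each $t_i$ is the least element of its orbit, matching the hypothesis that the $s_i$ are representative sequences of the $\mathcal{O}_i$.

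The main obstacle I anticipate is bookkeeping with the ``up to relabeling'' clause and the alignment of the single global $-1$ shift with the $n$ individual per-orbit shifts. Specifically, I must be sure that the column index at which each representative sequence $s_i$ switches from $a_{\ell p^*}^{(i)}$ to $a_{\ell p^*}^{(i)}-1$ is the \emph{same} $\ell$ for all $i$, so that the interlaced sequence is obtained by a single coherent application of Lemma~\ref{bij} rather than $n$ mismatched ones; this is where I expect the bulk of the verification to lie, and it hinges on the fact that~(\ref{ss}) places the strict inequality at the fixed position determined by $p^*$, independent of the particular orbit.
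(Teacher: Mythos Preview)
Your proposal is correct and is precisely the unpacking of the paper's one-line proof ``Follows from Theorem~\ref{thm}'': you have correctly identified that the single application of Lemma~\ref{bij} to the length-$nq$ interlaced $d$-sequence~(\ref{thmseq}) coincides with interlacing the $n$ individual representative sequences, because the strict-inequality position in~(\ref{ss}) depends only on $p^*$ and not on the orbit. The bookkeeping you flag as the main obstacle is exactly what makes the corollary immediate, and your handling of the ``up to relabeling'' clause matches the hypothesis $t_1<\cdots<t_n$ in Theorem~\ref{thm}.
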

\begin{proof}
    Follows from Theorem \ref{thm}.
\end{proof}
McMullen in [5] gives an explicit algorithm for computing rotational orbits for a fixed rotation number $p/q$. Example \ref{alg} describes this same algorithm. In fact, Theorem \ref{thm} gives us a something stronger: we can compute all rotational \textit{sets} by checking if representative sequences can be interlaced then taking unions of the corresponding rotational orbits.

\begin{exmp}\label{alg}
Let us construct a $\sigma_{4}$-rotational orbit with rotation number $\frac{2}{5}$. The multiplicative inverse of 2 in $\mathbb{Z}/ 5\mathbb{Z}$ is 3. Choose a representative sequence, say the 3-sequence $0,1,1,1,2.$ Add 1 to the last 2 terms to get $0,1,1,2,3$. Substituting these numbers for $a_0,a_3,a_6=a_1,a_9=a_4,a_{12}=a_2$ in $(a_0,a_1,a_2,a_3,a_4)$ we obtain the 5-tuple $(0,1,3,1,2)$ which corresponds to the rational number
$\frac{118}{1028}$. This gives the $\sigma_4$-rotational orbit $\{\frac{118}{1028},\frac{391}{1028},\frac{472}{1028},\frac{541}{1028},\frac{865}{1028}\}$ with rotation number $\frac{2}{5}$.
\end{exmp}

\begin{exmp}\label{alg2}
The undirected graph in Figure 1 shows all $\sigma_{4}$-rotational sets with rotation number $p/4$ for any $p\in \mathbb{Z}/ 4\mathbb{Z}$ with gcd$(p,4)=1$. The vertices represent $\sigma_{4}$-rotational orbits, labeled by their representative sequences. Two vertices are connected by an edge if and only if their representative sequences can be interlaced. It follows that any set of $n$ mutually adjacent vertices, or $n$-{\it clique}, gives sequences which can be interlaced. Therefore each $n$-clique in the graph corresponds to a $\sigma_{4}$-rotational set which is a union of $n$ orbits. There are 15 1-cliques, 30 2-cliques, and 16 3-cliques in the graph. Altogether there are 61 $\sigma_{4}$-rotational sets with rotation number $p/4.$
\end{exmp}

\begin{figure}[t]
\begin{center}
\begin{tikzpicture}
\coordinate (offset) at (-1,0.5);

\coordinate (1) at (0*2,5);

\coordinate (21) at (-0.5*2,4);
\coordinate (22) at (0.5*2,4);

\coordinate (31) at (-1*2,3);
\coordinate (32) at (0,3);
\coordinate (33) at (1*2,3);

\coordinate (41) at (-1.5*2,2);
\coordinate (42) at (-0.5*2,2);
\coordinate (43) at (0.5*2,2);
\coordinate (44) at (1.5*2,2);

\coordinate (51) at (-2*2,1);
\coordinate (52) at (-1*2,1);
\coordinate (53) at (0,1);
\coordinate (54) at (1*2,1);
\coordinate (55) at (2*2,1);

\draw (1) -- (0.5*2 -0.55,4 +0.55);
\draw (21) -- (0 -0.55, 3 +0.55);
\draw (22) -- (1*2 -0.55, 3 +0.55);
\draw (31) -- (-0.5*2 -0.55, 2 +0.55);
\draw (32) -- (0.5*2 -0.55, 2 +0.55);
\draw (33) -- (1.5*2 -0.55, 2 +0.55);
\draw (41) -- (-1*2 -0.55, 1 +0.55);
\draw (42) -- (0 -0.55, 1 +0.5);
\draw (43) -- (1*2 -0.55, 1 +0.55);
\draw (44) -- (2*2 -0.55, 1 +0.55);

\node[above left] at (1)  {0000};

\node[above left] at (21) {0001};
\node[above left] at (22) {0002};

\node[above left] at (31) {0011};
\node[above left] at (32) {0012};
\node[above left] at (33) {0022};

\node[above left] at (41) {0111};
\node[above left] at (42) {0112};
\node[above left] at (43) {0122};
\node[above left] at (44) {0222};

\node[above left] at (51) {1111};
\node[above left] at (52) {1112};
\node[above left] at (53) {1122};
\node[above left] at (54) {1222};
\node[above left] at (55) {2222};

\filldraw (1) circle (2.5pt);

\filldraw (21) circle (2.5pt);
\filldraw (22) circle (2.5pt);

\filldraw (31) circle (2.5pt);
\filldraw (32) circle (2.5pt);
\filldraw (33) circle (2.5pt);

\filldraw (41) circle (2.5pt);
\filldraw (42) circle (2.5pt);
\filldraw (43) circle (2.5pt);
\filldraw (44) circle (2.5pt);

\filldraw (51) circle (2.5pt);
\filldraw (52) circle (2.5pt);
\filldraw (53) circle (2.5pt);
\filldraw (54) circle (2.5pt);
\filldraw (55) circle (2.5pt);

\draw (21) -- (22);
\draw (31) -- (33);
\draw (41) -- (44);
\draw (51) -- (55);

\draw (1) -- (51);
\draw (22) -- (52);
\draw (33) -- (53);
\draw (44) -- (54);

\end{tikzpicture}
\end{center}
\caption{ Graph representing $\sigma_{4}$-rotational sets with rotation number $\frac{1}{4}$ or $\frac{3}{4}$}
\end{figure}
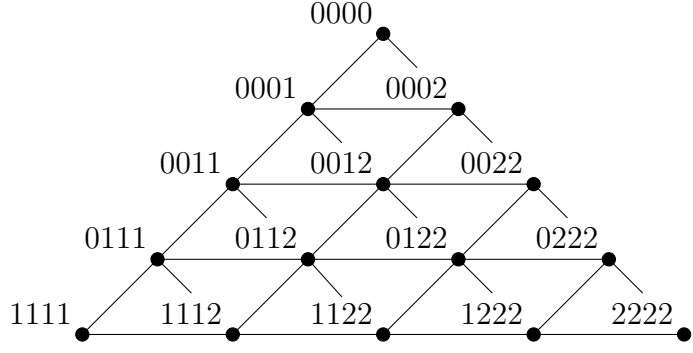

The graph in Figure 1 has a nice structure: it can be embedded in $\mathbb{R}^2$ to make a simplicial complex which is a subdivision of the triangle or $2$-simplex. It turns out to be true that the set of $\sigma_d$-rotational sets with fixed rotation number can be naturally embedded as the vertices of a simplicial complex which subdivides the $(d-2)$-simplex. This phenomenon was noticed in [5]. A proof is provided in [7]. The simplicial subdivision obtained in this way coincides with the \textit{edgewise subdivision} from [4].

\begin{corollary}\label{cor}
{\it There are} ${{d-2+q}\choose{d-2}}$ $\sigma_{d}$-{\it rotational orbits with rotation number} $p/q$ in lowest terms.
\end{corollary}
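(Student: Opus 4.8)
The plan is to read the count directly off the bijection that the discussion preceding this corollary has already assembled. Recall from the characterization of least elements, equation (\ref{ss}), that a $\sigma_d$-rotational orbit with rotation number $p/q$ in lowest terms is determined by its least element $t=(a_0,\dots,a_{q-1})$, and that such least elements correspond exactly to the $d$-sequences (\ref{ss}) carrying the single strict inequality $a_{-(p+1)p^*}<a_{-pp^*}$. Applying the bijection of Lemma \ref{bij} at that strict-inequality position converts (\ref{ss}) into the representative sequence (\ref{rep}), which is an arbitrary $(d-1)$-sequence of length $q$.

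First I would note that these identifications compose into a bijection between the set of $\sigma_d$-rotational orbits with rotation number $p/q$ and the set of all $(d-1)$-sequences of length $q$; this is exactly what the definition of representative sequence records. It then remains only to count the $(d-1)$-sequences of length $q$.

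Finally I would invoke the stars-and-bars count given in the definition of an $r$-sequence: there are $\binom{r-1+k}{r-1}$ $r$-sequences of length $k$. Substituting $r=d-1$ and $k=q$ yields $\binom{(d-1)-1+q}{(d-1)-1}=\binom{d-2+q}{d-2}$, the asserted number of orbits.

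There is essentially no remaining obstacle, since all the structural content lives in Theorem \ref{thm}, its specialization to $n=1$, and Lemma \ref{bij}. The only point needing care is confirming that the Lemma \ref{bij} correspondence is genuinely onto the full set of $(d-1)$-sequences with no residual constraint --- equivalently, that the lone strict inequality in (\ref{ss}) is precisely the condition the bijection relaxes --- so that the enumeration counts every $(d-1)$-sequence exactly once.
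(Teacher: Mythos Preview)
Your argument is correct and matches the paper's proof exactly: the paper simply observes that representative sequences are in bijection with $\sigma_d$-rotational orbits of rotation number $p/q$, and there are $\binom{d-2+q}{d-2}$ such $(d-1)$-sequences of length $q$. You have spelled out in more detail the same chain of identifications (Corollary~\ref{cor}/equation~(\ref{ss}) $\to$ Lemma~\ref{bij} $\to$ counting formula) that the paper compresses into one sentence.
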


\begin{proof}
There are ${{d-2+q}\choose{d-2}}$ representative sequences each corresponding to a unique orbit.
\end{proof}

\begin{corollary}
{\it A} $\sigma_{d}$-{\it rotational set contains at most} $d-1$ {\it orbits}.
\end{corollary}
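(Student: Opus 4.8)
The plan is to reduce the statement to a counting fact about representative sequences and then bound the number of orbits by counting ``jumps'' in a nondecreasing sequence. Let $A$ be a $\sigma_d$-rotational set with rotation number $p/q$. By Proposition \ref{inter}, $A$ is a union of $n=\gcd(p,q)$ \emph{distinct} orbits, each $\sigma_d$-rotational with the same rotation number, which in lowest terms is $p'/q'$ with $p'=p/n$ and $q'=q/n$. So it suffices to prove that at most $d-1$ distinct orbits can be combined; equivalently, that at most $d-1$ distinct representative sequences can be interlaced.

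First I would invoke the corollary characterizing unions of orbits via interlacing to replace the orbits by their representative sequences $s_1,\dots,s_n$, which are \emph{distinct} (the correspondence between orbits and representative sequences is a bijection) $(d-1)$-sequences of length $q'$, and which can be interlaced. After the relabeling in the definition of ``can be interlaced,'' assume $s_1,\dots,s_n$ are ordered so that
$$C:\quad b_0^{(1)},b_0^{(2)},\dots,b_0^{(n)},b_1^{(1)},\dots,b_1^{(n)},\dots\dots,b_{q'-1}^{(n)}$$
is a $(d-1)$-sequence, i.e.\ nondecreasing with all entries in $\{0,1,\dots,d-2\}$. Writing $C=(c_0\le c_1\le\dots\le c_{nq'-1})$ with $c_{jn+(i-1)}=b_j^{(i)}$, the key numerical input is that a nondecreasing sequence valued in the $(d-1)$-element set $\{0,\dots,d-2\}$ has at most $d-2$ strict increases (positions $k$ with $c_k<c_{k+1}$).

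Next I would exhibit $n-1$ \emph{distinct} strict increases of $C$, one per consecutive pair. Within each column $j$ the entries $b_j^{(1)}\le\dots\le b_j^{(n)}$ are sorted, since they are consecutive entries of $C$; hence $s_i\le s_{i+1}$ pointwise, and because the $s_i$ are distinct we get $s_i\neq s_{i+1}$, so some column $j$ has $b_j^{(i)}<b_j^{(i+1)}$. As $b_j^{(i)}=c_{jn+i-1}$ and $b_j^{(i+1)}=c_{jn+i}$ occupy adjacent indices of $C$, this is a genuine strict increase of $C$ located at index $jn+i-1\equiv i-1\pmod n$. Letting $i$ range over $1,\dots,n-1$, the residues $0,1,\dots,n-2$ are pairwise distinct, so the associated strict increases occur at distinct indices of $C$. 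This produces at least $n-1$ strict increases, whence $n-1\le d-2$ and $n\le d-1$.

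The step I would watch most carefully is the bookkeeping that turns ``$s_i\neq s_{i+1}$'' into a strict increase of $C$ at a controlled residue modulo $n$; once $C$ is set up with the indexing $c_{jn+(i-1)}=b_j^{(i)}$ this is immediate, but it is essential to argue with the representative-sequence ($(d-1)$-sequence) formulation rather than the raw $d$-sequence (\ref{thmseq}). The latter takes $d$ values and so admits up to $d-1$ strict increases, which would only give the weaker bound $n\le d$. The strict inequality $a_{-(p+1)p^*}^{(n)}<a_{-pp^*}^{(1)}$ of Theorem \ref{thm} is exactly what the passage to representative sequences via Lemma \ref{bij} uses to absorb the extra unit and sharpen the bound from $d$ to $d-1$; indeed, one could alternatively keep the $d$-sequence and note that this strict inequality supplies one further jump at residue $n-1\pmod n$, again yielding $n\le d-1$.
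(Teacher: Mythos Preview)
Your argument is correct and follows essentially the same route as the paper: pass to the interlaced $(d-1)$-sequence of representative sequences and bound the number of orbits by the number of strict increases, which is at most $d-2$. Your bookkeeping via residues mod $n$ to ensure the $n-1$ strict increases are at distinct positions is a bit more explicit than the paper's, but the idea is identical.
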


\begin{proof}
Suppose that the union of $n$ orbits with rotation number $p/q$ is a $\sigma_{d^{-}}$rotational set. Then there exists a  $(d-1)$-sequence with length $nq$ which can be unlaced to give $n$ distinct $(d-1)$-sequences with length $q$. Let $s'=b_{0},b_{1},b_{2},\dots,b_{nq-1}$ be this $(d-1)$-sequence and let

$$s_{i}=b_{i},b_{i+n},b_{i+2n},\dots,b_{i+(q-1)n}\qquad\text{ for } 0\leq i\leq n-1.$$

\noindent Since the $s_{i}$ are distinct, adjacent sequences in $s_{0},s_{1},\dots,s_{n-1}$ must differ by at least one term. Then going across $s'$ we should find at least $n-1$ pairs of nonequal adjacent terms. But since $s'$ is a $(d-1)$-sequence (there are only $d-1$ possibilities for the terms), there can be at most $d-2$ such pairs. This shows that $n-1 \leq d-2$ or $n\leq d-1$.
\end{proof}

\begin{theorem}\label{main}
{\it Let} $N_{k}$ {\it be the number of} $\sigma_{d}$-{\it rotational sets with rotation number} $p/q$ (in lowest terms) {\it that contain precisely} $k$ {\it orbits. Each} $N_{k}$ {\it is given recursively by}

\begin{align}\label{form}
\begin{split}
N_{1} &={{d-2+q}\choose{d-2}},\text{ }
{\it and}\\
N_{k} &={{d-2+kq}\choose{d-2}} - \sum_{j=1}^{k-1} {{k-1}\choose{j-1}} N_{j},\text{ for } 2\leq k\leq d-1.
\end{split}
\end{align}
\end{theorem}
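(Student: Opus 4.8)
The plan is to move everything into the language of representative sequences and to count ordered tuples rather than unordered sets. Recall from the discussion following the interlacing corollary that $\sigma_d$-rotational orbits with rotation number $p/q$ in lowest terms are in bijection with representative sequences, i.e. with $(d-1)$-sequences of length $q$, and that a union of $k$ distinct orbits is $\sigma_d$-rotational exactly when the associated collection of $k$ distinct representative sequences can be interlaced. Thus $N_k$ equals the number of $k$-element \emph{sets} of distinct $(d-1)$-sequences of length $q$ that can be interlaced. First I would replace sets by ordered tuples: I claim such a set admits a \emph{unique} ordering realizing the interlacing. Indeed, if the interlacing $b_0^{(1)},\dots,b_0^{(k)},b_1^{(1)},\dots$ is a $(d-1)$-sequence, then comparing terms within each block forces $s_1\le s_2\le\dots\le s_k$ term by term; a family of distinct sequences that is totally ordered term-by-term lies on a single chain in the pointwise partial order and so has a unique increasing enumeration. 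Hence $N_k$ equals the number of ordered $k$-tuples $(s_1,\dots,s_k)$ of \emph{distinct} $(d-1)$-sequences of length $q$ whose interlacing in that order is a $(d-1)$-sequence.

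Next I would set $M_k=\binom{d-2+kq}{d-2}$, the number of $(d-1)$-sequences of length $kq$. The unlacing map $s'\mapsto(s_0,\dots,s_{k-1})$ with $s_i=b_i,b_{i+k},\dots,b_{i+(q-1)k}$, already used in the proof that a rotational set has at most $d-1$ orbits, is a bijection between $(d-1)$-sequences of length $kq$ and ordered $k$-tuples of $(d-1)$-sequences of length $q$ that interlace in the given order, now \emph{allowing repetitions}: interlacing the $s_i$ back in order recovers $s'$, and each $s_i$ is automatically nondecreasing with values in $\{0,\dots,d-2\}$. Therefore $M_k$ counts precisely the ordered interlacing $k$-tuples in which repeated sequences are permitted.

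The heart of the argument is a bijective decomposition of an arbitrary interlacing tuple into its distinct values together with their multiplicities. Given $(s_1,\dots,s_k)$ interlacing to a $(d-1)$-sequence, the term-by-term inequalities $s_1\le\dots\le s_k$ show that equal sequences occupy consecutive positions (if $s_a=s_c$ with $a<c$ then $s_a\le s_b\le s_c=s_a$ forces equality for all $a\le b\le c$). Let $u_1,\dots,u_j$ be the distinct values in order, with multiplicities $m_1,\dots,m_j\ge 1$ and $\sum_l m_l=k$. I would then verify the key lemma that the interlacing condition for $(s_1,\dots,s_k)$ is equivalent to the interlacing condition for $(u_1,\dots,u_j)$: the within-block inequalities $u_1[t]\le\dots\le u_j[t]$ and the between-block inequalities $u_j[t]\le u_1[t+1]$ are unaffected by repeating each $u_l$ a positive number of times, so collapsing a run (or, conversely, blowing up a distinct-value tuple by positive multiplicities) preserves the property of interlacing to a $(d-1)$-sequence. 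This makes $(s_1,\dots,s_k)\mapsto\big((u_1,\dots,u_j),(m_1,\dots,m_j)\big)$ a bijection onto pairs consisting of an ordered tuple of $j$ distinct interlacing sequences and a composition of $k$ into $j$ positive parts. Since there are $\binom{k-1}{j-1}$ such compositions and $N_j$ such distinct tuples, summing over $j$ gives
\[
M_k=\sum_{j=1}^{k}\binom{k-1}{j-1}N_j .
\]
Isolating the $j=k$ term (using $\binom{k-1}{k-1}=1$) and substituting $M_k=\binom{d-2+kq}{d-2}$ yields the stated recursion, while $k=1$ gives $N_1=M_1=\binom{d-2+q}{d-2}$; the restriction $k\le d-1$ is exactly the earlier bound on the number of orbits.

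The main obstacle I anticipate is the decomposition lemma of the third step: carefully proving both that equal sequences must be consecutive and that run-collapsing/blow-up preserves interlacing, so that the correspondence with (distinct tuple, composition) pairs is genuinely a bijection. Some care is also needed in the first step, where the uniqueness of the interlacing order rests on the fact that the pointwise order on sequences is only partial, so one must invoke that the interlacing hypothesis places the chosen sequences on a single chain; once these two points are secured, the recursion follows by routine algebra.
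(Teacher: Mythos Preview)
Your proof is correct and follows essentially the same approach as the paper: both identify $N_k$ with the number of $(d-1)$-sequences of length $kq$ whose $k$ unlaced subsequences are distinct, then decompose an arbitrary length-$kq$ sequence according to the number $j$ of distinct unlaced subsequences (which are consecutive), and count the groupings by compositions $\binom{k-1}{j-1}$. You are simply more explicit than the paper about the uniqueness of the interlacing order and about why collapsing or blowing up runs preserves the interlacing property.
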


\begin{proof}
We know $N_{1}={{d-2+q}\choose{d-2}}$ from Corollary \ref{cor}. Let $k\in \mathbb{Z}$ such that $2\leq k\leq d-1$. Each rotational set counted by $N_{k}$ can be identified by a $(d-1)$-sequence with length $kq$ obtained by interlacing $k$ distinct $(d-1)$-sequences, each with length $q$. By Lemma 2.4 the number of $(d-1)$-sequences with length $kq$ is ${{d-2+kq}\choose{d-2}}$. To find $N_{k}$, we want to subtract from ${{d-2+kq}\choose{d-2}}$ the number of sequences obtained by interlacing nondistinct representative sequences. Let $s'=b_{0},b_{1},b_{2},\dots,b_{kq-1}$ be a  $(d-1)$-sequence with length $kq$ and let

$$s_{i}=b_{i},b_{i+k},b_{i+2k},\dots,b_{i+(q-1)k} \qquad\text{ for } 0\leq i\leq k-1.$$

\noindent Now, because $s'$ is nondecreasing, identical sequences in $s_{0},s_{1},\dots,s_{k-1}$ group together with consecutive indices. Suppose there are precisely $j$ distinct sequences in $s_{0}, s_{1},\dots,s_{k-1}$, where $1< j\leq k-1$. There are $N_{j}$ possibilities for these distinct sequences. Furthermore, there are ${{k-1}\choose{j-1}}$ possibilities for how they are grouped, given by a choice of $j-1$ dividers in between elements of the list $s_{0},s_{1},\dots,s_{k-1}$. Thus, we subtract ${{k-1}\choose{j-1}} N_{j}$ from ${{d-2+kq}\choose{d-2}}$ for each $1\leq j\leq k-1$.
\end{proof}
The application of a combinatorial inversion formula (Proposition \ref{app}) gives us the following nonrecursive version of (\ref{form}).
\begin{equation*}
N_k = \sum_{j=1}^k (-1)^{k+j} {{k-1}\choose{j-1}}{{d-2+jq}\choose{d-2}}
\end{equation*}

\begin{proposition}\label{app}
Let $b=[b_1,\dots,b_m]^t\in\mathbb{R}^m$ be a column vector with real entries. The formulas
\begin{equation}\label{f1}
n_{1} =b_1,\quad
n_{k} =b_k - \sum_{j=1}^{k-1} {{k-1}\choose{j-1}} n_{j}\text{ for } 2\leq k\leq m
\end{equation}
and
\begin{equation}\label{f2}
n_k = \sum_{j=1}^k (-1)^{k+j} {{k-1}\choose{j-1}}b_j
\end{equation}
define the same vector $n=[n_1,\dots,n_m]^t\in\mathbb{R}^m$.
\end{proposition}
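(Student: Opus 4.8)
The plan is to recognize the two formulas as expressing $b$ and $n$ through a pair of mutually inverse triangular matrices, so that the equivalence reduces to a single binomial identity. First I would rewrite the recursion (\ref{f1}) in a symmetric closed form: absorbing the isolated $n_k$ into the sum (using $\binom{k-1}{k-1}=1$) shows that (\ref{f1}) is equivalent to the clean relation
$$b_k = \sum_{j=1}^k \binom{k-1}{j-1}\, n_j, \qquad 1\le k\le m,$$
which also covers the base case $k=1$. In matrix terms this reads $b = Ln$, where $L$ is the lower-triangular Pascal matrix with entries $L_{kj}=\binom{k-1}{j-1}$ (and $L_{kj}=0$ for $j>k$). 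Since $L$ is lower triangular with $1$'s on the diagonal it is invertible, so the recursion determines $n$ uniquely as $n=L^{-1}b$. Meanwhile (\ref{f2}) asserts $n = Mb$ with $M_{kj}=(-1)^{k+j}\binom{k-1}{j-1}$. Hence it suffices to prove $ML=I$, i.e.\ that $M=L^{-1}$.

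Next I would compute the $(k,i)$ entry of $ML$ directly:
$$(ML)_{ki} = \sum_{j=i}^{k} (-1)^{k+j}\binom{k-1}{j-1}\binom{j-1}{i-1}.$$
The key step is the trinomial-revision (subset-of-a-subset) identity $\binom{k-1}{j-1}\binom{j-1}{i-1}=\binom{k-1}{i-1}\binom{k-i}{j-i}$, which factors out the $j$-independent binomial and, after the substitution $l=j-i$, leaves
$$(ML)_{ki}=(-1)^{k+i}\binom{k-1}{i-1}\sum_{l=0}^{k-i}(-1)^l\binom{k-i}{l}.$$
The inner sum is $(1-1)^{k-i}$, which vanishes when $k>i$ and equals $1$ when $k=i$; together with $\binom{k-1}{k-1}=1$ this forces $(ML)_{ki}=\delta_{ki}$. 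Thus $ML=I$, so the closed form $n=Mb$ agrees with the unique solution $n=L^{-1}b$ of the recursion, completing the proof.

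The main obstacle is isolating and applying the right binomial identity: everything hinges on recognizing that the double binomial product can be re-summed via trinomial revision so that the remaining alternating sum collapses by the binomial theorem. The rewriting of (\ref{f1}) into the form $b=Ln$ and the observation that a unit lower-triangular system has a unique solution are routine; the only real content is the off-diagonal vanishing of the alternating binomial sum. If one prefers to avoid matrices entirely, the identical computation can be phrased as substituting (\ref{f2}) into the right-hand side of the rewritten recursion and checking it returns $b_k$; after swapping the order of summation one encounters exactly the same inner sum $\sum_{j}(-1)^{k+j}\binom{k-1}{j-1}\binom{j-1}{i-1}=\delta_{ki}$, so the two presentations are interchangeable.
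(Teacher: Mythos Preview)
Your proof is correct and shares the paper's overall framing: both rewrite (\ref{f1}) as $b=Ln$ with $L$ the lower-triangular Pascal matrix, observe that (\ref{f2}) reads $n=Mb$ with $M_{kj}=(-1)^{k+j}\binom{k-1}{j-1}$, and then reduce the proposition to showing $M=L^{-1}$. The difference lies entirely in how that inversion is verified. You compute $(ML)_{ki}$ entrywise, using trinomial revision to peel off $\binom{k-1}{i-1}$ and then collapsing the leftover alternating sum via $(1-1)^{k-i}$. The paper instead notes that, by the binomial theorem, $L$ sends the vector $[1,x,\dots,x^{m-1}]^t$ to $[1,x+1,\dots,(x+1)^{m-1}]^t$ while $M$ sends it to $[1,x-1,\dots,(x-1)^{m-1}]^t$; since these vectors for $m$ distinct values of $x$ form a basis (Vandermonde), $ML=I$ follows. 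Your route is a clean self-contained identity computation; the paper's route is shorter and more conceptual, interpreting $L$ and $M$ as the shift operators $x\mapsto x\pm 1$ on polynomials, which makes the inverse relation obvious without any index manipulation.
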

\begin{proof}
Let $L=(a_{ij})$ and $L'=(a'_{ij})$ be the $m\times m$ lower triangular matrices defined by $a_{ij}={{i-1}\choose{j-1}}$ and $a'_{ij}=(-1)^{i+j}{{i-1}\choose{j-1}}$. For any $x\in\mathbb{R}$ the binomial theorem gives
\begin{align*}
L[1,x,\dots,x^{m-1}]^t &=[1,x+1,\dots,(x+1)^{m-1}]^t, \\
L'[1,x,\dots,x^{m-1}]^t &=[1,x-1,\dots,(x-1)^{m-1}]^t.
\end{align*}
It follows that $L'=L^{-1}$ (by properties of the Vandermonde matrix). Formula (\ref{f1}) produces the solution $n$ of $Ln=b$ using forwards substitution, whereas (\ref{f2}) can be written as $n=L^{-1}b$.
\end{proof}

\end{section}
\begin{section}{Acknowledgements}

I would like to thank John Mayer and Debra Gladden for introducing me to rotation sets. I also thank Debra Gladden for her support and encouragement throughout my undergraduate years. I thank Saeed Zakeri for his helpful comments on a draft of this paper.
\end{section}
\section*{References}

\:\:\:\:\, [1] Alexander Blokh, James Malaugh, John Mayer, Lex Oversteegen, and Daniel Parris. Rotational subsets of the circle under $z^{d}$. {\it Topology and its Applications}, 153(1):1540-1570, 2006.

\medskip

[2] Lisa Goldberg. Fixed points of polynomial maps. I. Rotation subsets of the circles. {\it Annales} {\it scientifiques de l}'{\it École Normale Supérieure}, Ser. 4, 25(6):679-685, 1992.

\medskip

[3] Lisa Goldberg and John Milnor. Fixed points of polynomial maps. Part II. Fixed point portraits. {\it Annales scientifiques de l}'{\it cole Normale Suprieure}, 26(1):51-98, 1993.

\medskip

[4] Edelsbrunner, H., Grayson, D.R. Edgewise Subdivision of a Simplex. \textit{Discrete Comput Geom} \textbf{24}, 707–719 (2000).

\medskip

[5] Curtis McMullen. Dynamics on the unit disk: Short geodesics and simple cycles. \textit{Commentarii Mathematici Helvetici}, 85:723-749, 2010.

\medskip

[6] Carsten L. Petersen and Saeed Zakeri. On combinatorial types of periodic orbits of the map {\it x} $\mapsto kx$ (mod $\mathbb{Z}$). Part I: Realization, arXiv:1712.04506, 2017.

\medskip

[7] Saeed Zakeri, \textit{Rotation sets and complex dynamics}, Lecture Notes in Mathematics, vol. 2214, Springer, Cham, 2018.

\end{document}